%
%
%
%
\documentclass{amsart}

	\usepackage{amsmath,amssymb,amsthm,wrapfig,amsfonts,enumerate,latexsym}

	\newtheorem{dfn}{Definition}[section]
	\newtheorem{thm}[dfn]{Theorem}
	\newtheorem{prop}[dfn]{Proposition}
	\newtheorem{cor}[dfn]{Corollary}
	\newtheorem{lem}[dfn]{Lemma}
	\newtheorem{rem}[dfn]{Remark}

	\newtheorem{ack}{Acknowledgements\!\!}

\theoremstyle{definition}

\theoremstyle{remark}

\numberwithin{equation}{section}



	\newcommand{\dist}{\mathop{\mathit{d}} \nolimits}
	
	\newcommand{\diam}{\mathop{\mathrm{diam}} \nolimits}

        \newcommand{\e}{\mathop{\varepsilon}              \nolimits}

\begin{document}

\title[Embedding proper ultrametric spaces into $\ell_p$]{Embedding
proper ultrametric spaces into $\ell_p$ and
    its application to nonlinear Dvoretzky's theorem}

\author[Kei Funano]{Kei Funano}
\address{Research Institute for Mathematical Sciences, Kyoto University,
Kyoto 606-8502 JAPAN}
\email{kfunano@kurims.kyoto-u.ac.jp}
\thanks{This work was partially supported by Grant-in-Aid for
    Research Activity (startup) No.23840020.}


\subjclass[2000]{53C23}

\date{\today}


\keywords{Dvoretzky's theorem, embedding, ultrametric space}

\begin{abstract}We prove that every proper ultrametric space
 isometrically embeds into
 $\ell_p$ for any $p\geq 1$. As an application we discuss an $\ell_p$-version of nonlinear Dvoretzky's theorem.
\end{abstract}

\maketitle


\section{Introduction}
Recall that a metric space $(X,\rho)$ is called an
 \emph{ultrametric space} if for every $x,y,z\in X$ we have $\rho(x,y)
 \leq \max\{ \rho (x,z), \rho(z,y) \}$. Such spaces naturally appear and
 have applications in
 various areas such as number theory, $p$-adic analysis, and computer
 science (see \cite{Lem1}, \cite{Lemin}, \cite{Mo1, Mo2}).

  Let us briefly review several results with respect to isometric embedding
 of ultrametric spaces. Timan
 and Vestfrid \cite{VeTi, VeTi2} proved that any separable ultrametric space embed
 isometrically into $\ell_2$. Vestfrid \cite{Ves2} later proved that the result is also true if one
 replace $\ell_2$ by $\ell_1$ and $c_0$ by constructing a universal
 ultrametric space for the class of separable ultrametric space and
 using its property. Vestfrid \cite{Ves} also proved that a certain
class of countable ultrametric spaces embed isometrically into $\ell_p$
 for $p\geq 1$.
Lemin \cite{Lemin} proved that any separable ultrametric space embed isometrically
 into the Lebesgue space. He also raised a problem whether any separable
 ultrametric space embed isometrically into any infinite dimensional
 Banach space. Motivated by Lemin's problem, Shkarin \cite{Shk} proved that every finite ultrametric space embeds into every
infinite dimensional Banach space. From these results ultrametric spaces
 have attracted much attention in embedding theory.
 
 In this paper we tackle Lemin's problem in the case where the target
 Banach space is $\ell_p$. It is already well-known that every separable ultrametric space embeds
 isometrically into the function space $L_p$ for any $p\geq 1$. In fact,
 it follows from Timan and Vestfrid's result mentioned above and the
 fact that $\ell_2$ embeds isometrically into $L_p$. Since $\ell_2$ does
 not embed bi-Lipschitzly into $\ell_p$ for any $p\neq 2$
 (\cite[Corollary 2.1.6]{alkal}), embedding separable ultrametric spaces
 into $\ell_p$ is left as a problem. Our main theorem is the following: Recall that a metric space is \emph{proper} if every
 closed ball in $X$ is compact. 
 \begin{thm}\label{MT}Every proper ultrametric space isometrically embeds into
  $\ell_p$ for any $p\geq 1$. 
  \end{thm}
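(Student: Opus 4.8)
The plan is to realize the embedding through the natural tree of closed balls of $X$, assigning one coordinate of $\ell_p$ to each ball and choosing its weight so that the contributions telescope to the metric. First I would record the structural facts that make this possible. For a closed ball $B$ of positive diameter $d=\diam(B)$, the relation $u\sim v\iff \rho(u,v)<d$ partitions $B$ into its \emph{immediate children}; the strong triangle inequality makes this an equivalence relation, and each child is again a closed ball of diameter $<d$. Properness enters here: since closed balls are compact, hence totally bounded, any two points lying in distinct children are at distance exactly $d$, so an infinite family of children would be a $d$-separated infinite set, which is impossible. Thus every ball has finitely many children. The same total boundedness shows that for each point $x$ the realized distances $\{\rho(x,y):y\in X\}$ are finite in number above every threshold $\varepsilon>0$, so the closed balls containing $x$ form a chain whose diameters are distinct realized distances with infimum $0$; moreover each non-maximal ball has a well-defined immediate parent (the ball determined by the smallest realized distance exceeding $d$). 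Separability forces the family $\mathcal B$ of all such balls to be countable.

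Next I would fix a basepoint $o\in X$ and define, for each ball $B\in\mathcal B$ that has a parent, the weight
\[
a(B)^p=\tfrac12\bigl(\diam(\mathrm{par}(B))^p-\diam(B)^p\bigr)>0,
\]
together with the map $\tilde f\colon X\to\ell_p(\mathcal B)$ given by $\tilde f(x)_B=a(B)\,(\chi_B(x)-\chi_B(o))$, where $\chi_B$ is the indicator of $B$. The point of subtracting $\chi_B(o)$ is to cancel the infinitely many large balls containing both $x$ and $o$, which is what keeps $\tilde f(x)$ in $\ell_p$ when $X$ is unbounded.

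The heart of the verification is a telescoping identity. For the smallest ball $B_{xy}$ containing both $x$ and $y$, the balls $B\in\mathcal B$ with $\chi_B(x)\neq\chi_B(y)$ are precisely those strictly inside $B_{xy}$ containing exactly one of $x,y$; since $|\chi_B(x)-\chi_B(y)|\in\{0,1\}$, the same computation works for every $p\ge 1$, giving
\[
\|\tilde f(x)-\tilde f(y)\|_p^p=\sum_{B\ni x,\ B\subsetneq B_{xy}}a(B)^p+\sum_{B\ni y,\ B\subsetneq B_{xy}}a(B)^p.
\]
Along the descending chain of balls containing $x$ inside $B_{xy}$ the weights telescope to $\tfrac12\bigl(\diam(B_{xy})^p-0\bigr)$ because the diameters shrink to $0$; the same holds for $y$, so the two sums add to $\diam(B_{xy})^p=\rho(x,y)^p$. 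Taking $y=o$ shows in particular that $\|\tilde f(x)\|_p=\rho(x,o)<\infty$, so $\tilde f$ does map into $\ell_p$ and is an isometry.

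The step I expect to be the main obstacle is the structural groundwork rather than the final computation: proving that branching is finite and that immediate parents exist with the infimum of diameters along each branch equal to $0$, uniformly handling isolated points (where a branch terminates at a singleton) and non-isolated points (where it is an infinite chain accumulating at $0$), together with the unbounded case handled by the basepoint normalization. This is precisely where properness is indispensable: without compactness of balls the realized distances could accumulate from above, immediate parents need not exist, and the telescoping sum could fail to converge to the diameter, which is consistent with the difficulty of the general separable case for the target $\ell_p$.
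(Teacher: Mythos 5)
Your construction is, at its core, the same as the paper's: the weight you attach to a ball $B$, namely $a(B)^p=\tfrac12\bigl(\diam(\mathrm{par}(B))^p-\diam(B)^p\bigr)$, is exactly the paper's coefficient $(r_{i_1\cdots i_{k-1}}^p-r_{i_1\cdots i_k}^p)/2$ attached to the coordinate indexed by the partition element $B_{i_1\cdots i_k}$, and the isometry is verified by the same orthogonality-plus-telescoping computation, with $\rho(x,y)=\diam B_{xy}$ playing the role of $r_{i_1\cdots i_{n-1}}$. The genuine differences are in the packaging. First, you define the map on all of $X$ at once via indicator functions of balls, whereas the paper defines it only on a countable dense set of representatives $x_{i_1\cdots i_k}$ and concludes by density; your version avoids that step but obliges you to justify the tree structure of \emph{all} closed balls (finite branching, existence of immediate parents for balls of positive diameter, diameters along each branch tending to $0$), which you correctly flag as the place where properness enters. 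Second, and more substantially, you handle unbounded proper spaces by the basepoint normalization $\tilde f(x)_B=a(B)\bigl(\chi_B(x)-\chi_B(o)\bigr)$, which cancels the infinitely many large balls containing both $x$ and $o$; the paper instead exhausts $X$ by the compact balls $B(x_0,R)$ and glues the resulting embeddings using isometries of $\ell_p$. Your one-step normalization is cleaner and sidesteps the (rather terse) compatibility argument in the paper. Two small inaccuracies, neither fatal: the realized distances from a point are finite in number only in each bounded window $[\varepsilon,R]$, not above every threshold $\varepsilon$ when $X$ is unbounded (the weaker statement is all you need for the existence of parents and for $\diam(C_i)\to 0$ along branches); and a singleton $\{x\}$ at a non-isolated point is itself a closed ball with no immediate parent, so ``each non-maximal ball has a well-defined immediate parent'' is an overstatement --- but such balls receive no coordinate in your scheme, so nothing breaks, and it is also these parentless singletons (rather than all of $\mathcal B$) that must be discarded before invoking separability to get a countable index set.
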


  The case of general separable ultrametric spaces remains open. A similar method of the proof of Theorem \ref{MT} also implies an
   isometric embedding into $c_0$ (see Remark \ref{rem1}). 
   Our construction of isometric embeddings into $\ell_1$, $\ell_2$, and
   $c_0$ is different from the one by \cite{VeTi, VeTi2}, \cite{Ves, Ves2} in the
   case of proper ultrametric spaces. 

   As an application of Theorem \ref{MT} we obtain an $\ell_p$-version of
   nonlinear Dvoretzky's theorem, see Section $3$. 

 \section{Proof of the main theorem}

 We use some basic
  facts of compact ultrametric spaces (see \cite{hugh}, \cite[Section 2]{MN112}). Let $(X,\rho)$ be a compact ultrametric space and put $r_0:=\diam X$.
  Consider the relation $\sim_0$ on $X$ given by $x \sim_0 y$
  $\Longleftrightarrow $ $\rho(x,y)< r_0$. Since $\rho$ is ultrametric
  $\sim_0$ is an equivalence relation on $X$. The compactness of $X$
  implies that each equivalence
  class is a closed ball of radius strictly less than $r_0$ (see
  \cite[Section $2$]{MN112}). Since the distance between two distinct equivalence classes
  is exactly $r_0$ and $X$ is totally bounded, there are only finitely
  many equivalence
  classes, say, $\{B_1, \cdots, B_{k_1}\}$, where each $B_i$ is a closed
  ball of radius $r_i=\diam B_i<r_0$. Note that for any $x\in B_i$ and $y\in
  B_j$ $(i\neq j)$ we have $\rho(x,y)=r_0$. For each $i$ we choose
  $x_i\in B_i$ and fix it. As above for each $i_1=1,\cdots,k_1$ we
  consider the equivalence relation $\sim_{i_1}$ on $B_{i_1}$ given by $x \sim_{i_1} y$
  $\Longleftrightarrow $ $\rho(x,y)< r_{i_1}$. Then we
  can divide $X_{i_1}$ into finitely many equivalence classes, i.e., $B_{i_1}=
  \amalg_{i_2=1}^{k(i_1)} B_{i_1 i_2}$, where $B_{i_1 i_2}$ is a closed ball of
  radius $r_{i_1 i_2}=\diam B_{i_1 i_2}<r_{i_1}$. We may assume that
  $x_{i_1}\in B_{i_1 1}$. For each $i_1,i_2$, we choose a point
  $x_{i_1i_2}\in B_{i_1 i_2}$ so that $x_{i_1 1}= x_{i_1}$ and we fix
  $x_{i_1 i_2}$. Repeatedly we get a sequence $\mathcal{P}_k= \{ B_{i_1 \cdots i_k}
  \}_{i_1,\cdots, i_k}$ of partitions of $X$ satisfying
  the following: 

  \begin{enumerate}
   \item Each $B_{i_1 \cdots i_k}$ is a closed ball of radius $r_{i_1
         \cdots i_k}=\diam B_{i_1 \cdots i_k}$.
   \item If $r_{i_1 \cdots i_k}\neq 0$, then $r_{i_1 \cdots i_k}> r_{i_1
         \cdots i_{k+1}}$.
   \item $B_{i_1 \cdots i_{k-1}}= \amalg_{i_k}B_{i_1 \cdots i_{k-1}i_k}$.
   \end{enumerate}For each $i_1, \cdots, i_k$ we choose $x_{i_1\cdots i_k}\in B_{i_1\cdots i_k}$ so that
  $x_{i_1 \cdots i_k 1 \cdots 1}=x_{i_1 \cdots i_k}$. The
  compactness of $X$ yields the following:

  \begin{lem}[{cf.~\cite[Section 2]{MN112}}]\label{fact1}$\lim_{k\to \infty} \max_{i_1,\cdots, i_k} r_{i_1 \cdots i_k}=0 $.
   \end{lem}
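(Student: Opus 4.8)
The plan is to track the quantity $R_k := \max_{i_1, \cdots, i_k} r_{i_1 \cdots i_k}$ and show that $R_k \to 0$. Since every ball $B_{i_1 \cdots i_{k+1}}$ is contained in its parent $B_{i_1 \cdots i_k}$, property (1) gives $r_{i_1 \cdots i_{k+1}} = \diam B_{i_1 \cdots i_{k+1}} \leq \diam B_{i_1 \cdots i_k} = r_{i_1 \cdots i_k}$, so $R_{k+1} \leq R_k$. Thus $\{R_k\}_k$ is nonincreasing and converges to some $R \geq 0$, and it suffices to prove $R = 0$. I will argue by contradiction: assuming $R > 0$, I will produce an infinite sequence of points of $X$ whose pairwise distances are all at least $R$, which is impossible in the compact (hence sequentially compact) space $X$.

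To build such a sequence I first extract a single infinite chain of ``large'' balls. Call a ball $B_{i_1 \cdots i_k}$ \emph{large} if $r_{i_1 \cdots i_k} \geq R$. The large balls form a rooted tree under inclusion: the parent of a large ball is large (its diameter is at least that of the child), each ball has only finitely many children by the construction, and since $R_k \geq R$ for every $k$ there is at least one large ball at every level. A K\"onig-type (finite-branching) compactness argument then yields an infinite descending chain $B_{i_1} \supseteq B_{i_1 i_2} \supseteq \cdots$ with $r_{i_1 \cdots i_k} \geq R$ for all $k$.

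Finally I convert this chain into far-apart points. Since $r_{i_1 \cdots i_k} \geq R > 0$, property (2) gives $r_{i_1 \cdots i_{k+1}} < r_{i_1 \cdots i_k}$, so the chain-child $B_{i_1 \cdots i_{k+1}}$ is a proper subset of $B_{i_1 \cdots i_k}$ and hence there is at least one sibling equivalence class; choose a point $y_k$ in such a sibling. By the construction, distinct equivalence classes of $B_{i_1 \cdots i_k}$ lie at mutual distance exactly $r_{i_1 \cdots i_k}$, so $\rho(y_k, z) = r_{i_1 \cdots i_k} \geq R$ for every $z \in B_{i_1 \cdots i_{k+1}}$. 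As $y_{k'} \in B_{i_1 \cdots i_{k'}} \subseteq B_{i_1 \cdots i_{k+1}}$ for all $k' > k$, the points $\{y_k\}_k$ are pairwise at distance at least $R$, contradicting the compactness of $X$; therefore $R = 0$. The main obstacle is the middle step: turning the mere lower bound $R_k \geq R$ on the maximal radius into one genuinely nested chain of large balls, which is where finite branching together with compactness is essential. Once the chain is in hand, the ultrametric fact that siblings sit at the full parent-diameter distance makes the final contradiction immediate.
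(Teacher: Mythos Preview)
Your argument is correct. The monotonicity of $R_k$ is immediate, the K\"onig-lemma step is justified because the subtree of ``large'' balls is infinite (one node at every level) and finitely branching, and the extraction of the $R$-separated sequence $\{y_k\}$ from the chain is sound: property~(2) forces each large ball in the chain to split into at least two equivalence classes, and siblings in an ultrametric partition sit at the full parent diameter. The resulting infinite $R$-separated set contradicts total boundedness.

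There is nothing to compare against in the paper itself: the lemma is stated with a reference to \cite[Section~2]{MN112} and no proof is supplied. Your write-up is a clean, self-contained justification. One small stylistic remark: you could bypass the explicit K\"onig-lemma invocation by observing that each $\bigcap_k B_{i_1\cdots i_k}$ along the chain is a nonempty compact set (nested closed sets in a compact space), picking a point $x$ in the intersection, and then running the same sibling-selection argument along the unique chain through $x$; but your formulation is equally valid and arguably more transparent about where finiteness of the branching is used.
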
In particular, $\bigcup_{k=1}^{\infty}\{x_{i_1 \cdots i_k}
  \}_{i_1,\cdots,i_k} $ is a countable dense subset of $X$.

  \begin{lem}[{cf.~\cite[Section 2]{MN112}}]\label{fact2} For every closed
   ball $B$ in $X$, there exist $k$ and $B_{i_1 \cdots i_k}\in
   \mathcal{P}_k$ such that $B=B_{i_1\cdots i_k}$.
   \end{lem}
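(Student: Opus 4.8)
The plan is to fix a closed ball $B\subseteq X$ and identify it with the hierarchy ball that shares a center with it, using two standard facts about ultrametric balls: every point of a ball is a center, so that $B=\{x\in X:\rho(a,x)\le r\}$ for any chosen $a\in B$ and the radius $r$ of $B$; and any two balls are either nested or disjoint. I first fix $a\in B$ and record the chain of hierarchy balls through $a$: for each $k$ let $C_k$ be the unique member of $\mathcal{P}_k$ with $a\in C_k$. By property (3) these are nested, $X=C_0\supseteq C_1\supseteq\cdots$, with $\diam C_k=:d_k$ strictly decreasing as long as it is positive (property (2)) and $d_k\to 0$ by Lemma \ref{fact1}.

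The key step is the identity $C_k=\{x\in X:\rho(a,x)\le d_k\}$. The inclusion $\subseteq$ is immediate since $\diam C_k=d_k$. For $\supseteq$ I argue the contrapositive: if $x\notin C_k$, let $j<k$ be the largest index with $x\in C_j$, so that $a$ and $x$ lie in distinct children of $C_j$. Because the children of a hierarchy ball are exactly the $\sim$-classes defined by the threshold equal to that ball's own diameter, two points in distinct children are at distance precisely $d_j$; hence $\rho(a,x)=d_j>d_k$, using the strict decrease of property (2) (note $d_j>0$, as $C_j$ properly subdivides). In particular the positive distances realized from $a$ are exactly the values $d_0>d_1>\cdots$.

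It then remains to select the correct level. If $r\ge d_0=\diam X$, then $B=X=C_0$ and we are done. Otherwise let $k$ be the least index with $d_k\le r$; such $k$ exists because $d_k\to 0$ when $r>0$, while the degenerate case $B=\{a\}$ forces $a$ to be isolated and hence $d_k=0$ for large $k$. By minimality of $k$ we have $d_{k-1}>r\ge d_k$, so no point lies at distance in the interval $(d_k,r]$ from $a$, whence
\[
B=\{x:\rho(a,x)\le r\}=\{x:\rho(a,x)\le d_k\}=C_k,
\]
and $C_k=B_{i_1\cdots i_k}\in\mathcal{P}_k$ is the required ball.

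The main obstacle is the identity $C_k=\{x:\rho(a,x)\le d_k\}$: it is precisely here that one must exploit the specifics of the construction, namely that each subdivision separates points using the parent ball's own diameter as the threshold, so that distinct children sit at distance equal to the parent diameter, together with the strict decrease in property (2). Once this is in hand, the remainder is bookkeeping with the discreteness of the distance spectrum from $a$ and the vanishing of the radii guaranteed by Lemma \ref{fact1}.
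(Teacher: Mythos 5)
The paper itself gives no proof of Lemma \ref{fact2}; it is quoted from \cite[Section 2]{MN112}. Your argument is correct and is the natural one: the chain $X=C_0\supseteq C_1\supseteq\cdots$ of hierarchy balls through a chosen point $a\in B$, together with the observation that distinct children of $C_j$ lie at mutual distance exactly $d_j=\diam C_j$ (this is exactly how the equivalence classes are cut, with threshold equal to the parent's diameter), yields $C_k=\{x:\rho(a,x)\le d_k\}$ and shows that the positive distances from $a$ all occur among the $d_j$; the gap $d_{k-1}>r\ge d_k$ then collapses $\{x:\rho(a,x)\le r\}$ onto $C_k$. All the steps check out, including the use of property (2) to get $d_j>d_k$ via $d_j>0$.

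One small caveat: your assertion that the degenerate case $B=\{a\}$ forces $a$ to be isolated tacitly assumes closed balls have positive radius. If one admits $B=\{x:\rho(a,x)\le 0\}=\{a\}$ at a non-isolated point $a$, the lemma itself fails, since every $C_k$ then has positive diameter and hence at least two points. This is a defect of the statement rather than of your proof, and it is harmless for the application in the proof of Theorem \ref{MT}, where only balls $B(x_0,R)$ with $R>0$ are used; it is worth making the positive-radius convention explicit.
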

   \begin{proof}[Proof of Theorem \ref{MT}]
    We first prove the theorem for compact ultrametric
    spaces. Let $(X,\rho)$ be a compact ultrametric space and let
    $\mathcal{P}_k = \{ B_{i_1 \cdots i_k} \}_{i_1,\cdots, i_k}$, $r_{i_1 \cdots i_k}$, and
    $x_{i_1 \cdots i_k}$ as above. Put $N_k:= \# \mathcal{P}_k$. We consider each coordinate of an element of
  $\ell_p^{N_k}$ is indexed by $(i_1, \cdots, i_k)$. We define a map
  $f_k: \{ x_{i_1 \cdots i_k} \}_{i_1 , \cdots , i_k}\to \ell_p^{N_k}$
  as follows: $(f_k(x_{i_1 \cdots  i_k}))_{(j_1, \cdots, j_{k})}: = 0$
  if $(j_1,\cdots, j_k) \neq (i_1,\cdots, i_k)$ and 
  \begin{align*}(f_1(x_{i_1}))_{i_1}:=\frac{(r_0^p-r_{i_1}^p)^{\frac{1}{p}}}{2^{\frac{1}{p}}}
   \text{ and }
   (f_k(x_{i_1 \cdots i_k}))_{(i_1, \cdots, i_k)}:=
  \frac{(r_{i_1 \cdots i_{k-1}}^p-r_{i_1 \cdots
  i_k}^p)^{\frac{1}{p}}}{2^{\frac{1}{p}}} \text{ if }k\geq 2.\end{align*}
  Note that $f_k(x_{i_1\cdots i_k})\perp f_k(x_{j_1 \cdots j_k})$ for two distinct $(i_1,\cdots , i_k)$, $(j_1,\cdots, j_k)$.

  We define a map $f:\bigcup_{k=1}^{\infty} \{
  x_{i_1  \cdots i_k}\}_{i_1, \cdots, i_k} \to \ell_p $ as
  follows. For each $x_{i_1  \cdots i_k}$, putting
  $i_m:=1$ for $m>k$, we define
  \begin{align*}
    f(x_{i_1 i_2 \cdots i_k}) := 
   (f_1(x_{i_1}), f_2(x_{i_1 i_2}), \cdots , f_m(x_{i_1\cdots i_m}),
   \cdots ).
   \end{align*}The right-hand side in the above definition is
  actually the element of $\ell_p$ since
  \begin{align*}
   \sum_{m=1}^{\infty} \| f_m(x_{i_1 \cdots i_m})\|_p^p=
   \sum_{m=1}^{\infty} \frac{r_{i_1 \cdots i_{m-1}}^p - r_{i_1 \cdots
   i_m}^p}{2} = \frac{r_{0}^p}{2} <+\infty
   \end{align*}by Lemma \ref{fact1}. Note that $f$ is well-defined in the
  sense that $f(x_{i_1 \cdots
  i_k 1 \cdots  1})=f(x_{i_1 \cdots i_k})$.

  We shall prove that $f$ is an isometric embedding. Since $\bigcup_{k=1}^{\infty} \{
  x_{i_1  \cdots i_k}\}_{i_1, \cdots, i_k}$ is dense in $X$ this implies
  the theorem. Taking two distinct elements $x_{i_1 \cdots i_k}$ and $x_{j_1 \cdots
  j_l}$ we may assume that $k\leq l$. Put $i_m:=1 $ for $m>k$. Then we have $(i_1,\cdots, i_l)\neq (j_1,\cdots,j_l)$. Letting
  \begin{align*}
   n:=\min \{ m \leq l \mid i_m \neq j_m\}
   \end{align*}we get
  $\rho(x_{i_1 \cdots i_k}, x_{j_1 \cdots j_l})= \diam B_{i_1 \cdots
  i_{n-1}}=r_{i_1 \cdots i_{n-1}}$ if $n\geq 2$ and $\rho(x_{i_1 \cdots
  i_k}, x_{j_1 \cdots j_l})=r_0$ if
  $n=1$. Since $f_m(x_{i_1 \cdots i_m})=f_m(x_{j_1 \cdots j_m})$ for $m<n$ and
  $f_m(x_{i_1 \cdots i_m}) \perp f_m(x_{j_1 \cdots j_m})$ for $m\geq n$,
  \begin{align*}
   \| f(x_{i_1 \cdots i_k})-f(x_{j_1 \cdots j_l})\|_p^p=\ &
   \sum_{m=n}^{\infty}\|f(x_{i_1 \cdots i_m}) \|_p^p +
   \sum_{m=n}^{\infty} \| f(x_{j_1 \cdots j_m}) \|_p^p\\
   =\ & r_{i_1
   \cdots i_{n-1}}^p\\  =\ & \rho (x_{i_1 \cdots i_k},x_{j_1 \cdots j_l})^p.
   \end{align*}
  This completes the proof of the theorem for compact ultrametric spaces.

    Let $(X,\rho)$ be a proper ultrametric space and fix a point $x_0\in
    X$. For any $r>0$ we denote by $B(x_0,r)$ the closed ball of radius
    $r$ centered at $x_0$. For any $R>0$ let
    $f_1:B(x_0,R)\to \ell_p$ be an isometric embedding constructed as in
    the above way. It suffices to prove that for any $R'>R$ we can
    construct an isometric embedding $f_2:B(x_0,R')\to \ell_p$ as in the above way, which
    extends $f_1$ in the following sense: There exists an isometry
    $T:\ell_p \to \ell_p$ such that $T \circ f_2|_{B(x_0,R)}=f_1$.
    This is possible by the above construction. In fact, keep dividing
    $B(x_0,R')$ as in the above way. Then at finite steps we reach at $B(x_0,R)$ by
    Lemma \ref{fact2} since $B(x_0,R')$ is compact. From the above
    construction we easily see the existence of $f_2$ and $T$ . This
    completes the proof of the theorem.
 \end{proof}

 \begin{rem}\label{rem1}\upshape  A similar method of the above proof implies new
  isometric embeddings of proper ultrametric spaces into $c_0$. In fact,
  let us consider first the case of compact ultrametric spaces. Using the same notation as above, for each $k$ we define $g_k: \{ x_{i_1 \cdots i_k}\}_{i_1, \cdots, i_k}\to
    \ell_{\infty}^{N_k}$ as follows: $(g_k(x_{i_1 \cdots  i_k}))_{(j_1, \cdots, j_{k})}: = 0$
  if $(j_1,\cdots, j_k) \neq (i_1,\cdots, i_k)$ and 
  \begin{align*}(g_1(x_{i_1}))_{i_1}:=r_0
   \text{ and }
   (g_k(x_{i_1 \cdots i_k}))_{(i_1, \cdots, i_k)}:=
  r_{i_1 \cdots i_{k-1}} \text{ if }k\geq 2.\end{align*}Then we define a
  map $g:\bigcup_{k=1}^{\infty} \{ x_{i_1 \cdots i_k}\}_{i_1,\cdots,
  i_k}\to c_0$ by
  \begin{align*}
   g(x_{i_1 i_2 \cdots i_k}):= (g_{1}(x_{i_1}), g_2(x_{i_1 i_2} ),
   \cdots, g_m(x_{i_1  \cdots i_m}),\cdots),
   \end{align*}where as in the above proof we put $i_m:=1$ for
  $m>k$. Note that the right-hand side of the above definition is in
  $c_0$ by Lemma \ref{fact1}. We can easily check that the map
  $g:\bigcup_{k=1}^{\infty} \{ x_{i_1 \cdots i_k} \}_{i_1, \cdots, i_k}
  \to c_0$ is an isometric embedding. As in the proof of Theorem \ref{MT} this construction also implies an
  isometric embedding from every proper ultrametric space into $c_0$.
  \end{rem}

   \section{$\ell_p$-version of nonlinear Dvoretzky's theorem}

   In this section we apply Theorem \ref{MT} to obtain an
   $\ell_p$-version of nonlinear Dvoretzky's theorem. Refer to
   \cite{blmn2}, \cite{ChKa} for the case of finite metric spaces.

   We say that a metric space $X$ is \emph{embedded with distortion} $D\geq 1$
in a metric space $Y$ if there exist a map $f:X\to Y$ and a constant $r>0$ such that
\begin{align*}
 r \dist_X(x,y) \leq \dist_Y(f(x),f(y))\leq D r \dist_X(x,y) \text{ for all
 }x,y\in X.
 \end{align*}

 Dvoretzky's theorem states that for every $\e>0$, every
 $n$-dimensional normed space contains a $k(n,\e)$-dimensional subspace
 that embeds into a Hilbert space with distortion $1+\e$
 (\cite{Dvo60}). This theorem was conjectured by Grothendieck
 (\cite{groth}). See \cite{Mil71} and \cite{MilSch96}, \cite{Sch} for
 the estimate of $k(n,\e)$. Bourgain, Figiel, and Milman \cite{BFM86} first studied Dvoretzky's
 theorem in the nonlinear setting. They obtained that for every $\e>0$, every finite metric
 space $X$ contains a subset $S$ of sufficiently large size which embeds into a Hilbert space with
 distortion $1+\e$. See \cite{blmn}, \cite{MN07}, \cite{NT} for further
 investigation. Recently Mendel and Naor \cite{MN112, MN11} studied an another variant of nonlinear
 Dvoretzky's theorem, answering a question by T.~Tao. For example they
 obtained the following: For a metric space $X$ we denote by $\dim_H(X)$
 the Hausdorff dimension of $X$.

  \begin{thm}[{cf.~\cite[Theorem 1.7]{MN11}}]\label{MNDV}There exists a universal constant $c\in
  (0,\infty)$ such that for every $\e \in (0,\infty)$, every compact
   metric space $X$ contains a closed subset $S\subseteq X$ that embeds with distortion
  $2+\e$ in an ultrametric space, and
  \begin{align*}
   \dim_H(S)\geq \frac{c\e}{\log (1/\e)}\dim_H(X).
   \end{align*}
  \end{thm}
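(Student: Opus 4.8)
The plan is to deduce Theorem~\ref{MNDV} from a scale-free \emph{ultrametric skeleton} statement together with the classical correspondence between Hausdorff dimension and measures with power-law decay. First I would recast the dimension inequality measure-theoretically. By Frostman's lemma, for every $\gamma<\dim_H(X)$ there is a compact set $K\subseteq X$ and a Borel probability measure $\mu$ on $K$ with $\mu(B_d(x,r))\le r^{\gamma}$ for all $x$ and all $0<r\le\diam X$. Conversely, the mass distribution principle says that if a closed set $S$ carries a nonzero measure $\nu$ with $\nu(B_d(x,r))\le Cr^{\beta}$ then $\dim_H(S)\ge\beta$. Hence it suffices to produce, for this fixed $\mu$, a closed subset $S\subseteq K$ and an ultrametric $\rho$ on $S$ with $d\le\rho\le(2+\varepsilon)d$ on $S$, such that $\mu|_S$ has $\rho$-ball growth exponent at least $\frac{c\varepsilon}{\log(1/\varepsilon)}\gamma$. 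Since then $B_\rho(x,r)\subseteq B_d(x,r)\subseteq B_\rho(x,(2+\varepsilon)r)$, the power bound transfers from $\rho$-balls to $d$-balls at the same exponent (only the constant changes), so $\dim_H(S)\ge\frac{c\varepsilon}{\log(1/\varepsilon)}\gamma$; letting $\gamma\uparrow\dim_H(X)$ yields the theorem.

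The core construction I would use is an iterated random partition across geometric scales. Fix a ratio $a\in(0,1)$ and consider the scales $\{a^{k}\diam X\}_{k\ge 0}$. Starting from $X$, at each level I would refine the current clusters by a stochastic separated (``padded'') decomposition of the kind used in metric embedding theory, splitting each cluster of diameter $\approx a^{k}\diam X$ into sub-clusters of diameter $\le a^{k+1}\diam X$ in such a way that a given point lies deep inside its own sub-cluster with probability controlled by the density of cuts near it. Declaring $\rho(x,y)$ to be the diameter of the smallest cluster containing both $x$ and $y$ produces, by construction, an ultrametric with $\rho\ge d$; choosing $a$ close to $1$ (quantitatively $1-a\asymp\varepsilon$) forces $\rho\le(2+\varepsilon)d$ on the points that survive the selection described below.

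The selection step, which must retain a large sub-measure, is the heart of the matter. At each scale one discards the points that fall too near a cut, and a probabilistic (first-moment/weighting) argument over the random decomposition shows that these choices can be made consistently across all scales while losing measure at a controlled rate, so that the surviving compact set $S$ inherits a bound $\mu(B_\rho(x,r))\lesssim r^{\beta}$ with $\beta\ge\frac{c\varepsilon}{\log(1/\varepsilon)}\gamma$. Passing from finite or discrete approximations to a general compact $X$ is then comparatively routine: one runs the construction on finite nets at finer and finer resolution and extracts a limit by compactness, with the ultrametric and the measure bound surviving the passage to the limit.

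I expect the main obstacle to be the sharp dependence $\frac{\varepsilon}{\log(1/\varepsilon)}$, not the mere existence of an ultrametric skeleton. A crude iteration already yields a subset embeddable in an ultrametric space with distortion $2+\varepsilon$, but the exponent it preserves is far from optimal. Attaining the stated bound requires a near-optimal balance, simultaneously at every scale, between the shrink ratio $1-a\asymp\varepsilon$ (which governs the distortion) and the cut probability (which governs the measure lost), together with an amortized charging argument that assembles the per-scale losses into the single factor $\frac{\varepsilon}{\log(1/\varepsilon)}$. Verifying that this tradeoff is achievable, consistently with the known lower bound showing the logarithm cannot be removed, is where essentially all of the difficulty resides.
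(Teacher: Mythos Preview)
The paper does not prove Theorem~\ref{MNDV} at all: it is quoted verbatim from Mendel and Naor \cite[Theorem~1.7]{MN11} and used as a black box, so there is no ``paper's own proof'' to compare your proposal against. Your sketch is a faithful high-level outline of the Mendel--Naor argument itself (Frostman reduction, iterated padded random decompositions at geometric scales yielding an ultrametric, a weighted first-moment argument to retain measure with the correct exponent, and a limiting step), and you correctly identify that the entire difficulty lies in achieving the sharp $\tfrac{\varepsilon}{\log(1/\varepsilon)}$ dependence rather than in the existence of \emph{some} ultrametric skeleton. Since the present paper simply invokes the result, your write-up is not so much an alternative as a summary of the cited source.
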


  Note that since every separable ultrametric space isometrically embed
  into $\ell_1$, $\ell_2$, and $c_0$ (\cite{Ves2}), the above $S$ embeds into these spaces.

  Applying Theorem \ref{MT} to Theorem \ref{MNDV} we obtain the following
  $\ell_p$-version of nonlinear Dvoretzky's theorem:
  \begin{cor}There exists a universal constant $c\in
  (0,\infty)$ such that for every $\e \in (0,\infty)$, every compact
   metric space $X$ contains a closed subset $S\subseteq X$ that embeds with distortion
  $2+\e$ in $\ell_p$, and
  \begin{align*}
   \dim_H(S)\geq \frac{c\e}{\log (1/\e)}\dim_H(X).
   \end{align*}
   \end{cor}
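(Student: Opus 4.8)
The plan is to deduce the Corollary directly by composing the two results already available to us, namely Theorem \ref{MNDV} and our main embedding Theorem \ref{MT}. The key observation is that distortion is multiplicative under composition of embeddings: if $X$ embeds into $Y$ with distortion $D_1$ and $Y$ embeds into $Z$ with distortion $D_2$, then $X$ embeds into $Z$ with distortion at most $D_1 D_2$. Here the intermediate space is an ultrametric space, and the final target is $\ell_p$.

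First I would fix $\e\in(0,\infty)$ and apply Theorem \ref{MNDV} to the given compact metric space $X$. This produces a closed subset $S\subseteq X$, an ultrametric space $U$, and an embedding of $S$ into $U$ with distortion $2+\e$, satisfying the Hausdorff dimension bound $\dim_H(S)\geq \frac{c\e}{\log(1/\e)}\dim_H(X)$ for the universal constant $c$ from that theorem. The subset $S$ and its dimension estimate are exactly what the Corollary asserts, so the only thing left to supply is the passage from $U$ to $\ell_p$.

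Next I would observe that the ultrametric space into which $S$ embeds may be taken to be proper. Since $S$ is a closed subset of a compact space it is itself compact, hence its bi-Lipschitz image inside $U$ is compact; replacing $U$ by the closure of that image (or by the image itself) gives a compact, and in particular proper, ultrametric space without changing the distortion. I can then apply Theorem \ref{MT} to this proper ultrametric space to obtain an \emph{isometric} embedding into $\ell_p$, i.e.\ an embedding of distortion $1$.

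Finally I would compose the two maps. The embedding of $S$ into the ultrametric space has distortion $2+\e$ and the embedding of that ultrametric space into $\ell_p$ has distortion $1$, so by multiplicativity the composite embeds $S$ into $\ell_p$ with distortion $(2+\e)\cdot 1=2+\e$, as required. I do not anticipate a genuine obstacle here, since both inputs are already in hand; the one point that merits a sentence of care is the reduction to a \emph{proper} ultrametric target, because Theorem \ref{MT} is stated for proper ultrametric spaces rather than arbitrary separable ones, and this is precisely where the compactness of $S$ (inherited from $X$) is used.
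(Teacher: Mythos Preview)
Your proposal is correct and follows the same approach as the paper, which simply states that the corollary is obtained by applying Theorem~\ref{MT} to Theorem~\ref{MNDV}. Your additional care in reducing to a compact (hence proper) ultrametric target via the image of $S$ is exactly the detail needed to make the composition legitimate.
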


   Mendel and Naor also obtained the following impossibility result for
   distortion less than $2$:

 \begin{thm}[{cf.~\cite[Theorem 1.8]{MN11}}]\label{hanrei}For every $\alpha>0$ there exists a compact metric space $(X,\dist)$ of Hausdorff
dimension $\alpha$, such that if $S\subseteq X$ embeds into a Hilbert space with distortion strictly smaller
than $2$ then $\dim_H(S) = 0$.
\end{thm}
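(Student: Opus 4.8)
The plan is to exhibit $X$ as a self-similar compact space assembled from a fixed finite ``gadget'' reproduced at all scales, and to derive the dimension bound from a Hilbert-space obstruction that is insensitive to passing to subsets. Concretely, I would fix a finite metric space $G$ on $N$ points together with a contraction ratio $c\in(0,1)$, and let $X$ be the attractor obtained by replacing, at each stage, every point by a copy of $G$ rescaled by $c$; the resulting space is compact, and its similarity dimension equals $\log N/\log(1/c)$, which I would set equal to $\alpha$ by choosing $c$ appropriately once $N$ is fixed (any $\alpha\in(0,\infty)$ is attainable since this ratio ranges over $(0,\infty)$ as $c$ varies). The one essential design constraint is that $X$ must \emph{not} be an ultrametric space: otherwise Theorem \ref{MT} (with $p=2$) would embed all of $X$, hence every subset, isometrically into a Hilbert space. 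Thus $G$ has to carry a genuinely non-ultrametric, non-Hilbertian feature, calibrated so that the distortion threshold it forces is exactly $2$ — not larger (which would contradict the achievability of $2+\varepsilon$ from Theorem \ref{MNDV}) and not smaller.

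The obstruction tool I would use is the property that singles out Hilbert space among metric spaces, namely the parallelogram law in its metric (Enflo roundness / short-diagonal) form: for any four points $y_1,y_2,y_3,y_4$ of a Hilbert space,
\[
 \|y_1-y_3\|^2+\|y_2-y_4\|^2 \leq \|y_1-y_2\|^2+\|y_2-y_3\|^2+\|y_3-y_4\|^2+\|y_4-y_1\|^2 .
\]
A single four-point configuration in which the diagonals are forced to be large relative to the sides already yields a distortion lower bound, but only of order $\sqrt{2}$; the sharp constant $2$ is asymptotic and must be produced by summing roundness defects across infinitely many scales of $X$. So the heart of the argument is a multi-scale telescoping estimate: assuming $S\subseteq X$ embeds into a Hilbert space with distortion $D<2$, I would apply the inequality simultaneously to the gadget copies that $S$ meets at each level and show that the accumulated defect is incompatible with $D<2$ unless $S$ is confined, at every level, to a vanishing fraction of the available gadget copies.

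From such a per-level sparsity statement the dimension conclusion follows by a standard covering estimate: if at scale $c^k$ the set $S$ can meet only $o(N^k)$ of the $N^k$ level-$k$ cells, then the natural covers of $S$ are exponentially smaller than those of $X$, and letting $k\to\infty$ forces $\dim_H S=0$. To finish I would also verify $\dim_H X=\alpha$ by producing a Frostman-type mass-distribution measure for the matching lower bound, the upper bound being immediate from the self-similar cover.

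The main obstacle is the amplification to the \emph{sharp} constant $2$. A naive iteration of a non-Hilbertian gadget (for instance repeatedly substituting four-cycles, as in diamond graphs) produces distortion growing without bound, which would wrongly forbid even distortion $2+\varepsilon$; conversely a single scale only gives $\sqrt{2}$. The delicate point is therefore to calibrate the non-ultrametric defect of $G$ and the ratio $c$ so that the telescoped roundness inequality saturates exactly at $D=2$ — large enough that distortion-$(2+\varepsilon)$ ultrametric (hence Hilbert) embeddings of positive-dimensional subsets survive, yet tight enough that distortion strictly below $2$ annihilates the dimension. Making this calibration precise, and checking that the sparsity propagates through all scales without loss, is where essentially all the work lies.
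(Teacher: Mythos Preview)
Your plan has a structural gap: a self-similar space built on a \emph{fixed} finite gadget $G$ cannot have the property Theorem~\ref{hanrei} asserts. Pick any two points $a,b\in G$ and let $X'\subseteq X$ be the set of addresses using only $\{a,b\}$ at every level. In the tree-like metric implicit in your construction (distance governed by the first level of disagreement), $X'$ is an ultrametric Cantor set, hence embeds isometrically into Hilbert space by Theorem~\ref{MT} (or \cite{VeTi2}), while $\dim_H X'=\alpha\cdot\log 2/\log N>0$. More generally, any fixed $G$ admits sub-gadgets $G'\subsetneq G$ whose iterated copies embed with distortion well below~$2$, so on such $S=X'$ your roundness telescoping never engages: there is simply no bad four-tuple at any scale. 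The short-diagonal inequality is the right device for lower-bounding the Hilbert distortion of a \emph{given} space (diamond graphs, Laakso spaces), but that obstruction is not hereditary under passing to subsets, which is precisely what the theorem demands. Your ``calibration'' difficulty is thus not a matter of tuning constants; the mechanism itself is missing a Ramsey-type ingredient.

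What is actually required of the building block is that \emph{every} subset above a negligible size threshold already incur distortion close to $2$. As the paper recounts immediately after the statement, Mendel and Naor obtain this from the Erd\H{o}s--R\'enyi random graph $G(n,1/2)$ equipped with its $\{1,2\}$-valued metric $W_n$: by \cite{blmn}, every subset of $W_n$ of cardinality exceeding $2\log_2 n+K(\delta^{-2}\log(2/\delta))^2$ has $\ell_2$-distortion at least $2-\delta$. Note that the threshold $2$ comes for free from the $\{1,2\}$ values (any such metric embeds with distortion at most $2$ via an orthonormal frame), not from an accumulation of roundness defects across scales. The compact space $X$ is then assembled from the $W_n$ in \cite[Section~7.3]{MN11}; the logarithmic Ramsey bound forces any $S\subseteq X$ embeddable with distortion $<2$ to meet only $n^{o(1)}$ of the $n$ children at each node, and iterating this sparsity over all scales yields $\dim_H S=0$.
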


We shall consider an impossibility problem for the $\ell_p$-version of nonlinear
Dvoretzky's theorem. 

In the proof of Theorem \ref{hanrei} Mendel and Naor used the following
result: Let $G$ be the random graph on $n$-vertices of the
Erd\"os-Reyni model $G(n,1/2)$, i.e., every edge is present
independently with probability $1/2$. From $G$ we construct a metric
space $W_n$ by assigning the distance between each two vertices of $G$ by
$1$ if they are joined by an edge, and $2$ if they are not joined by an edge. Then the obtained metric space $W_n$ satisfies the following
property (\cite{blmn}). There exists $K\in (0,\infty)$ such that for any $n\in \mathbb{N}$
there exists an $n$-point metric space $W_n$ such that for every $\delta
\in (0,1)$ any subset
of $W_n$ of size larger than $2\log_2 n + K (\delta^{-2}\log(2/\delta))^2$
must incur distortion at least $2-\delta$ when embedded into $\ell_2$.

 Bartal, Linial, Mendel, and Naor obtained a similar result for the
    same $W_n$ when considering $\ell_p$ instead of $\ell_2$
    (\cite{blmn2}). Then Charikar and Karagiozova \cite[Theorem 1.3]{ChKa} improved the result in
    \cite{blmn2}: For any $\delta\in (0,1)$ and
    $p\geq 1$, there is a constant $c(p,\delta)$ depending only on $p$
    and $\delta $ such that any subset of $W_n$ of size larger than
       $c(p,\delta) \log n$ must incur distortion at least
       $2-\delta$ when embedded into $\ell_p$. 

    Then applying this result to the proof in \cite[Section
    7.3]{MN11} implies the following: 
   \begin{prop}For every $p\geq 1$ and $\alpha>0$, there exists a compact metric space
    $(X,\dist)$ with $\dim_H(X,\dist)=\alpha$, such that if $S\subseteq
    X$ embeds into $\ell_p$ with distortion strictly smaller than $2$
    then $\dim_H(S)=0$.
    \end{prop}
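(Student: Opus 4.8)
The plan is to follow the template already established by Mendel and Naor's proof of Theorem \ref{hanrei}, replacing the single input about embedding into $\ell_2$ (the result about the random-graph metric space $W_n$) with the sharper $\ell_p$-version due to Charikar and Karagiozova. The target compact metric space $(X,\dist)$ will be built, exactly as in \cite[Section 7.3]{MN11}, as a hierarchical/self-similar concatenation of rescaled copies of the finite spaces $W_n$ across infinitely many scales. The idea is that any closed subset $S$ of positive Hausdorff dimension must, at infinitely many scales, contain a subset that looks like a large portion of some $W_n$; such a portion cannot be embedded into $\ell_p$ with distortion strictly below $2$ unless it is small, and this forces a dimension bound that collapses $S$ to Hausdorff dimension zero.

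First I would fix $p\geq 1$ and the target Hausdorff dimension $\alpha>0$, and recall the Charikar--Karagiozova estimate in the form stated in the excerpt: for each $\delta\in(0,1)$ there is $c(p,\delta)$ so that any subset of $W_n$ of size exceeding $c(p,\delta)\log n$ incurs distortion at least $2-\delta$ when embedded into $\ell_p$. Next I would reproduce the construction from \cite[Section 7.3]{MN11} verbatim, choosing the sizes $n_k$ of the building blocks $W_{n_k}$ and the scale factors so that the resulting limit space has Hausdorff dimension exactly $\alpha$; the tuning of these parameters is identical to the Hilbert-space case, because the only place the target Banach space enters is through the numerical threshold ($c(p,\delta)\log n$ here, versus $2\log_2 n + K(\delta^{-2}\log(2/\delta))^2$ for $\ell_2$). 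Since both thresholds are $O(\log n)$, the same choices of $n_k$ and scales that produce dimension $\alpha$ in the Hilbert case continue to work here, and the Hausdorff-dimension computation for $X$ goes through unchanged.

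Then I would run the dimension-bounding argument. Suppose $S\subseteq X$ embeds into $\ell_p$ with distortion strictly smaller than $2$, say at most $2-\delta$ for some $\delta\in(0,1)$. At each scale $k$ the restriction of the embedding to the corresponding copy of $W_{n_k}$ inside $S$ also has distortion at most $2-\delta$, so the Charikar--Karagiozova bound forces the intersection of $S$ with each block to have at most $c(p,\delta)\log n_k$ points. This uniform logarithmic cap on the branching of $S$ at every scale, fed into the standard Hausdorff-dimension estimate for self-similar trees, yields $\dim_H(S)=0$, precisely as in the argument replaced here.

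The main obstacle I expect is not any new idea but faithfully transporting the parameter bookkeeping of \cite[Section 7.3]{MN11}: one must check that the $\ell_p$ threshold $c(p,\delta)\log n$ plays exactly the same structural role in the dimension estimate as the $\ell_2$ threshold, so that replacing one by the other leaves the construction and the final collapse-to-zero conclusion intact. The slight $\delta$- and $p$-dependence of the constant $c(p,\delta)$ (as opposed to the explicit $\delta$-dependence in the $\ell_2$ bound) is harmless, since $\delta$ is fixed once the distortion of a hypothetical embedding is fixed, and the construction of $X$ must be carried out before any such embedding is considered; care is needed only to ensure the blocks $W_{n_k}$ grow fast enough that the cap $c(p,\delta)\log n_k$ is eventually negligible against the block sizes for every $\delta$, which follows from the same growth conditions used in the Hilbert-space case.
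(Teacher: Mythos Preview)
Your proposal is correct and matches the paper's approach exactly: the paper's proof is the one-line remark that the proposition follows by feeding the Charikar--Karagiozova threshold $c(p,\delta)\log n$ into the Mendel--Naor construction of \cite[Section~7.3]{MN11} in place of the $\ell_2$ threshold, which is precisely what you outline. Your discussion of the parameter bookkeeping (in particular that both thresholds are $O_\delta(\log n)$, so the same growth of the blocks $W_{n_k}$ suffices) is accurate and more explicit than the paper itself.
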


    The case of the distortion $2$ remains open for any $p\geq 1$.

\begin{ack}\upshape
        The author wish to express his gratitude to Mr. Ryokichi
 Tanaka for discussion. The author also would like to express his thanks
 to Professor Assaf Naor for valuable comments and suggestions which improved the preliminary version of this paper.
 \end{ack}

\end{document}